\newtheorem{theorem}{Theorem}[section]
\newtheorem{lemma}[theorem]{Lemma}
\newtheorem{conjecture}[theorem]{Conjecture}
\newtheorem{definition}{Definition}
\title{$(2,4)$-Colorability of Planar Graphs Excluding $3$-, $4$-, and $6$-Cycles}
\author{ Pongpat Sittitrai$^{1}$ \hskip 0.2in Wannapol Pimpasalee$^{2}$\hskip 0.2in Kittikorn Nakprasit$^{3, 4, *}$}
\address{
	$^{1}$\small Futuristic Science Research Center, School of Science, Walailak University, Nakhon Si Thammarat 80160, Thailand. \newline
	Email: pongpat.sittitrai@gmail.com
	\newline
	$^{2}$\small Department of Science and Mathematics, Faculty of Science and Health Technology, Kalasin University, Kalasin, 46000, Thailand.\newline
	Email: pwannapol@yahoo.com \newline
	$^{3}$\small Department of Mathematics, Faculty of Science, Khon Kaen University, Khon Kaen, 40002, Thailand.\newline
	$^{4}$\small Centre of Excellence in Mathematics, MHESI, Bangkok 10400, Thailand.\newline
	Email: kitnak@hotmail.com
}
\begin{document}
	
	\maketitle
	
	\begin{center}{\bf Abstract}\end{center}
	\indent\indent

	A defective $k$-coloring is a coloring on the vertices of a graph using colors $1,2, \dots, k$ such that adjacent vertices may share the same color. 
	A $(d_1,d_2)$-\emph{coloring} of a graph $G$ is a defective $2$-coloring of $G$ such that any vertex colored by color $i$ has at most $d_i$ adjacent vertices of the same color, where $i\in\{1,2\}$. 
	A graph $G$ is said to be  $(d_1,d_2)$-\emph{colorable} if it admits a $(d_1,d_2)$-coloring.

	Defective $2$-coloring in planar graphs without $3$-cycles, $4$-cycles, and $6$-cycles has been investigated by Dross and Ochem, as well as Sittitrai and Pimpasalee.  
	They showed that such graphs are $(0,6)$-colorable and $(3,3)$-colorable, respectively. 
	In this paper, we proved that these graphs are also $(2,4)$-colorable.

	\section{Introduction} 
	
	The concept of defective coloring (or improper coloring) was independently introduced by Andrews and Jacobson \cite{conceptdefective2}, Cowen et al. \cite{c32}, and Harary and Jones \cite{conceptdefective3}. 
	Let $d_1, d_2, \ldots, d_k$ be $k$ nonnegative integers.
	A $(d_1, d_2, \ldots, d_k)$-\emph{coloring} of a graph $G$ is a coloring 
	on the vertices of $G$ using colors $1,2, \dots, k$ such that any vertex assigned color $i$ has at most $d_i$ adjacent vertices of the same color, where $i\in\{1,2, \dots, k\}$. 
	If a graph $G$ admits a $(d_1, d_2, \ldots, d_k)$-coloring, then a graph $G$ is a $(d_1, d_2, \ldots, d_k)$-\emph{colorable}.
	
	Defective coloring generalizes proper coloring, as any $k$-colorable graph is a $(d_1, d_2, \ldots, d_k)$-colorable graph with $d_1 = d_2 = \cdots = d_k = 0$. 
	For example, The Four Color Theorem, proven by Appel and Haken \cite{app1, app2}, can be restated as every planar graph is $(0,0,0,0)$-colorable.

	There are several sufficient conditions for a planar graph to be $3$-colorable, or equivalently $(0,0,0)$-colorable, such as every planar graph without $3$-cycles is $3$-colorable \cite{g3f}.
	In 1976, Steinberg conjectured that every planar graph without $4$- and $5$-cycles is $3$-colorable, (open problem 2.9 in Jensen and Toft \cite{32}).
	Although this conjecture was later disproved by Cohen-Addad et al. \cite{add}, many researchers have continued to explore results related to Steinberg’s conjecture through defective coloring \cite{che200, xu110}.
	Similarly, several works attempt to find $(d_1, d_2, d_3)$-colorable planar graphs without cycles of certain lengths e.g., \cite{c32, c47200, c478110, c46200}.
	
	Inspired by Steinberg's conjecture,  Sittitrai and Nakprasit \cite{C452defec} initiated the study of $(d_1, d_2)$-coloring on planar graphs without $4$- and $5$-cycles. 
	They showed that these graphs are $(2,9)$-, $(3,5)$-, and $(4,4)$-colorable.
	Subsequently, some researchers have improved these results. 
	Liu and Lv \cite{Liu26} showed that these graphs are $(2,6)$-colorable. 
	Recently, Liu and Xiao \cite{Liu33} and  Li et al. \cite{Li33} independently proved that these graphs are also $(3,3)$-colorable.

	Additionally, Ma et al. \cite{Ma29} showed that planar graphs without $4$- and $6$-cycles are $(2,9)$-colorable, while Nakprasit et al. \cite{c4634} showed that they are also $(3,4)$-colorable. 
	
	For planar graphs without three specific short cycles, Borodin and Kostochka \cite{bo2}  showed that planar graphs without $3$-, $4$-, and $5$-cycles (equivalently, planar graphs with girth at least $6$) are $(1,4)$-colorable, while Havet and Sereni \cite{h44} showed that these graphs are $(2,2)$-colorable. 
	Dross and Ochem \cite{Dross06} proved that planar graphs without $3$-, $4$-, and $6$-cycles are $(0,6)$-colorable. 
	Recently, Sittitrai and Pimpasalee \cite{C34633} showed that such graphs are also $(3,3)$-colorable and, inspired by these results, posed the following conjecture.
	
	\begin{conjecture}\label{prob}
		Every planar graph without $3$-, $4$-, and $6$-cycles is $(d_1,d_2)$-colorable if $d_1+d_2 \geq 6$. 
	\end{conjecture}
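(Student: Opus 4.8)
The plan is to deduce the full conjecture from the boundary cases $d_1+d_2=6$ and then to close the single case that is not yet available. First I would record two elementary reductions. A $(d_1,d_2)$-coloring is also a $(d_1',d_2')$-coloring whenever $d_1'\ge d_1$ and $d_2'\ge d_2$, so it suffices to treat pairs with $d_1+d_2=6$; and since interchanging the two color classes turns a $(d_1,d_2)$-coloring into a $(d_2,d_1)$-coloring, I may assume $d_1\le d_2$. This leaves exactly the pairs $(0,6)$, $(1,5)$, $(2,4)$, and $(3,3)$. Three of them are already in hand: $(0,6)$ by Dross and Ochem, $(3,3)$ by Sittitrai and Pimpasalee, and $(2,4)$ by the present paper. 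None of these implies $(1,5)$, since $(1,5)$ is incomparable to both $(0,6)$ and $(2,4)$ under the monotone order. Hence the entire conjecture reduces to showing that every planar graph without $3$-, $4$-, and $6$-cycles is $(1,5)$-colorable, and the remainder of the plan attacks this case.

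Second, I would run a minimal counterexample argument. Let $G$ minimize $|V(G)|+|E(G)|$ among counterexamples; then every proper subgraph of $G$ is $(1,5)$-colorable, and I may assume $G$ is $2$-connected. The governing constraint is that color class $1$ induces a graph of maximum degree $1$ (a matching) while color class $2$ induces a graph of maximum degree at most $5$. To extend a coloring of a smaller graph to deleted vertices I would color a new vertex with color $2$ unless too many of its neighbors are already \emph{saturated} in class $2$ (carrying five class-$2$ neighbors), and color it with $1$ only when at most one neighbor lies in class $1$ and that neighbor is currently unmatched. A short local analysis should force minimum degree at least $2$ (degree-$\le 1$ vertices are trivially extendable).

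Third, the discharging. Since $G$ has no $3$-, $4$-, or $6$-cycle, every face has length $5$ or at least $7$; assigning each vertex and face the initial charge $d(x)-4$ gives total charge $-8$ by Euler's formula, and every face already has nonnegative charge (a $5$-face carries $+1$). I would send charge from faces of length at least $5$ and from high-degree vertices to the $2$- and $3$-vertices via rules tuned to the reducible configurations, and reach a contradiction by proving every vertex and face ends with nonnegative charge. A crucial structural input is that forbidding $6$-cycles keeps $5$-faces sparse — for instance two $5$-faces cannot share a path of length $2$, since vertices $w,a,b,u,d,c$ on pentagons $uvwab$ and $uvwcd$ would close the $6$-cycle $w\text{-}a\text{-}b\text{-}u\text{-}d\text{-}c\text{-}w$ — which bounds the demand a low-degree vertex can place on its incident faces.

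The main obstacle will be assembling a list of reducible configurations strong enough for the $(1,5)$ partition specifically. The difficulty is the extreme asymmetry: because class $1$ must be a matching, a single color-$1$ vertex can block the color-$1$ option for all of its neighbors, so the customary trick of recoloring to create room is far more constrained than in the balanced $(3,3)$ analysis; conversely class $2$ tolerates five conflicts, so the reducibility proofs must track the exact number of saturated class-$2$ neighbors around each configuration. I expect the hardest configurations to be clusters of $2$- and $3$-vertices lying on several $5$-faces, where the matching constraint of class $1$ and the saturation count of class $2$ are simultaneously tight; proving that such clusters are reducible, and that the discharging cannot avoid them, is where the real work will lie.
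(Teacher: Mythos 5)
Your reduction is sound as far as it goes: monotonicity in $(d_1,d_2)$ (any $(d_1,d_2)$-coloring is a $(d_1',d_2')$-coloring when $d_1'\geq d_1$ and $d_2'\geq d_2$) together with the symmetry $(d_1,d_2)\leftrightarrow(d_2,d_1)$ correctly reduces the conjecture to the four pairs $(0,6)$, $(1,5)$, $(2,4)$, $(3,3)$, and you rightly observe that $(1,5)$ is dominated by none of the three known results. So the conjecture is equivalent to the single open statement that planar graphs without $3$-, $4$-, and $6$-cycles are $(1,5)$-colorable. But that is where your proof stops being a proof. Keep in mind that the statement is posed in the paper only as a conjecture: the paper proves exactly the $(2,4)$ case (Theorem \ref{main}) and offers no argument for $(1,5)$, so there is no hidden proof you are reconstructing. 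Everything in your proposal from the minimal-counterexample setup onward is a program, not an argument: no reducible configuration is actually proved reducible, no discharging rules are specified, and no verification of nonnegative final charges is attempted. You yourself identify the decisive obstruction without resolving it — class $1$ must induce a matching, so the recoloring moves that drive the reducibility lemmas in this line of work (compare the proof of Lemma \ref{l1}(ii), where a vertex whose color-$2$ neighborhood is saturated is simply switched to color $1$) are no longer freely available, since switching a vertex to color $1$ can violate the matching condition at a neighbor. Note also that the generic Lemma \ref{l3} gives much weaker leverage for $(1,5)$: a $3^-$-vertex is only guaranteed two $(d_1+2)^+ = 3^+$-neighbors, one of which is a $7^+$-vertex, versus two $4^+$-neighbors with a $6^+$-vertex in the $(2,4)$ case, so the charge demands of $2$- and $3$-vertices are genuinely harder to meet and the discharging scheme of Section \ref{discharge} does not adapt.

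A smaller but genuine flaw: your structural claim that two $5$-faces cannot share a path of length $2$ assumes the six remaining boundary vertices are distinct. If the pentagons $uvwab$ and $uvwcd$ also satisfy, say, $b=d$ or $a=d$, your closed walk $w\text{-}a\text{-}b\text{-}u\text{-}d\text{-}c\text{-}w$ degenerates and is not a $6$-cycle; in those cases one instead finds a forbidden $4$-cycle (e.g., $w\text{-}a\text{-}b\text{-}c\text{-}w$ when $b=d$), so the conclusion can be rescued, but the argument as written is incomplete. In summary: the reduction to the $(1,5)$ case is correct and consistent with how the paper positions Theorem \ref{main} as a partial verification of Conjecture \ref{prob}, but the $(1,5)$ case — and hence the conjecture — remains unproven by your proposal, and closing it would require new reducible configurations and a new discharging analysis, not an adjustment of the existing one.
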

	
	We partially verify the conjecture with the following theorem. 
	\begin{theorem}\label{main}
		Every planar graph without $3$-, $4$-, and $6$-cycles is $(2,4)$-colorable. 
	\end{theorem}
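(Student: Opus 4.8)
The plan is to argue by contradiction using a minimal counterexample together with the discharging method. Let $G$ be a counterexample with the fewest edges: a plane graph with no $3$-, $4$-, or $6$-cycle that fails to be $(2,4)$-colorable, while every such graph with strictly fewer edges is $(2,4)$-colorable. Since the forbidden-cycle conditions and planarity are inherited by subgraphs, minimality lets me color $G$ with any vertex or small substructure deleted and then try to extend the coloring to all of $G$; any local pattern whose coloring always extends is therefore forbidden in $G$.

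First I would assemble a list of reducible configurations, with low-degree vertices as the workhorses. A vertex of degree $1$ is immediately reducible: delete it, color the rest by minimality, and give it whichever of the two colors its unique neighbor does not forbid through saturation, so $\delta(G)\ge 2$. For a vertex $v$ of degree $2$, coloring $v$ itself never violates its own bound (with at most two neighbors it is within both $d_1=2$ and $d_2=4$); the sole obstruction is that one neighbor might be a saturated color-$1$ vertex and the other a saturated color-$2$ vertex, blocking both choices. I would dispose of this case by recoloring a saturated neighbor and then coloring $v$, and the precise situations in which that recoloring fails become forbidden configurations. In this way I expect to show that $2$-vertices cannot cluster — a $2$-vertex adjacent to another $2$-vertex, or to certain low-degree vertices, should be reducible — exploiting the slack that color $2$ tolerates up to four same-colored neighbors.

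Next I would run the discharging argument. Assign each vertex $v$ the charge $\mu(v)=2d(v)-6$ and each face $f$ the charge $\mu(f)=\ell(f)-6$; by Euler's formula the total charge is $-12$. The cycle hypotheses force every face to have length $5$ or at least $7$, so the only negative charges sit on $2$-vertices (charge $-2$) and $5$-faces (charge $-1$), while every vertex of degree $\ge 3$ and every face of length $\ge 7$ carries nonnegative charge, with surplus growing in degree and length. I would then design rules that move charge from long faces and high-degree vertices onto the needy $2$-vertices and $5$-faces and verify that, after redistribution, every element has nonnegative charge. Because the reducibility lemmas bound how many $2$-vertices can lie on a single face and how many $5$-faces can meet at a vertex or share edges, these bounds should make the local accounting balance, contradicting the global total of $-12$.

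The hard part will be the $5$-faces and their interaction with the no-$6$-cycle condition. Two $5$-faces sharing a single edge create an $8$-cycle, which is permitted, but sharing a path of two edges would create a forbidden $6$-cycle; many analogous adjacencies are likewise ruled out only by the absence of $6$-cycles. Pinning down exactly which face-adjacency and vertex-neighborhood patterns this hypothesis excludes, and then tuning the discharging rules so that no residual deficit survives at any $5$-face or $2$-vertex, is the delicate heart of the argument. I anticipate the $(2,4)$ split to demand more care in the reducibility step than the already-settled $(0,6)$ and $(3,3)$ endpoints, since neither color here enjoys the extreme slack that simplifies those cases.
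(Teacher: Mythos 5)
Your overall strategy --- minimal counterexample, reducible configurations, discharging --- is exactly the paper's, and your choice of charges $\mu(v)=2d(v)-6$, $\mu(f)=\ell(f)-6$ is a legitimate alternative normalization to the paper's $d(\cdot)-4$. But what you have written is a plan rather than a proof, and the two places where you defer the work are precisely where the content of the theorem lives. First, the reducibility step: you only commit to $\delta(G)\ge 2$ and a vague claim that ``$2$-vertices cannot cluster.'' The argument needs substantially more: every $3^-$-vertex must be adjacent to two $4^+$-vertices, one of which is a $6^+$-vertex (the paper imports this from Choi--Raspaud), and --- the one genuinely new reducible configuration here --- every $4$-vertex must be adjacent to a $6^+$-vertex, proved by deleting the $4$-vertex, recoloring any $4^-$-neighbor of color $2$ all of whose remaining neighbors have color $2$, and then coloring the deleted vertex with $2$. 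Nothing in your sketch produces these statements, and without them the face-counting bounds (at most $\lfloor k/2\rfloor$ incident $2$-vertices on a $k$-face, and the trade-off between incident $2$- and $3$-vertices) that make the discharging close are unavailable.

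Second, under your normalization a $3$-vertex carries charge $0$, so a $5$-face incident to four $3$-vertices --- which the structural lemmas do permit --- has deficit $1$ and only a single high-degree vertex on its boundary to draw from, while that same vertex is simultaneously the designated donor for the $3$-vertices around it. Making this local accounting balance is exactly the ``delicate heart'' you acknowledge but do not carry out: the paper needs seven rules, including a charge-averaging rule between two $2$-vertices on a common $5$-face and donation amounts that depend on whether the donor has other $4^+$- or $5^+$-neighbors. Your emphasis on face--face adjacencies ($5$-faces sharing paths of length two creating $6$-cycles) is largely a red herring for this theorem --- the paper's verification never uses such adjacency restrictions beyond the fact that a $2$-vertex cannot lie on two $5$-faces --- whereas the degree conditions on the neighbors of $2$-, $3$-, and $4$-vertices, which you do not establish, are indispensable. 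As it stands the proposal has a genuine gap at both the reducibility and the verification stage.
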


	Notation and definitions used throughout this work are introduced as follows. 
	A \emph{$k$-vertex} denotes a vertex with a degree of exactly $k$, while a \emph{$k^+$-vertex} and a \emph{$k^-$-vertex} refer to vertices with a degree of at least $k$ and at most $k$, respectively. 
	This notation extends similarly to faces.
	
	The boundary walk associated with a face $f$ is denoted by $b(f)$. 
	A vertex $v$ is said to be \emph{incident} to a face $f$ if it lies on the boundary walk $b(f)$. 
	We let $N_k(f)$ denote the number of incident $k$-vertices of a face $f$.
	
	\section{Structures of Minimal Counterexamples} 
	
	This section presents the properties of a minimal counterexample $G$ to Theorem~\ref{main}, which are essential for proving the main result.
	A minimal counterexample $G$ has neither $3$-, $4$-, nor $6$-cycles and is not $(2,4)$-colorable, but each proper subgraph of $G$ is $(2,4)$-colorable.
	It is clear that $G$ is connected, and each vertex in $G$ has a degree of at least $2$.
	To be precise, each of the following results only assumes the necessary conditions of $G$ to have the desired properties.

	\begin{lemma}\label{l3} (Lemmas 2.1 and 2.2 in \cite{choi35})
		Let $G$ be a minimal non-$(d_1,d_2)$-colorable graph where $d_1 \leq d_2$.  
		If $v$ is a $3^-$-vertex in $G$, then $v$ is adjacent to at least two $(d_1+2)^+$-vertices, one of which is a $(d_2+2)^+$-vertex.
	\end{lemma}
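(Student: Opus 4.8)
The plan is to run the standard minimal-counterexample/extension argument. First I would record that a minimal non-$(d_1,d_2)$-colorable graph has minimum degree at least $2$: a vertex of degree at most $1$ can always be colored after $(d_1,d_2)$-coloring the proper subgraph $G-v$ (give it the color opposite to that of its unique neighbour, or any color if isolated), so the only $3^-$-vertices to treat are those with $\deg(v)\in\{2,3\}$. Assuming toward a contradiction that the conclusion fails, I fix by minimality a $(d_1,d_2)$-coloring $\phi$ of $G-v$, and the goal becomes to extend $\phi$ to $v$, contradicting the non-colorability of $G$.

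The engine is a single observation about when a neighbour blocks a color. Call a vertex \emph{$c$-saturated} under $\phi$ if it is colored $c$ and has exactly $d_c$ neighbours of color $c$; if I try to give $v$ the color $c$, the only neighbours that can be overloaded are the $c$-saturated ones, and such a neighbour $u$ must satisfy $\deg(u)\ge d_c+1$. The crucial point is that if moreover $\deg(u)=d_c+1$, then in $G-v$ the vertex $u$ has exactly $d_c$ neighbours, all of color $c$, and hence \emph{no} neighbour of the opposite color $\bar c$; recoloring $u$ to $\bar c$ is therefore completely safe and, decisively, triggers no cascade, since its former same-color neighbours only lose a conflict. Consequently a neighbour of degree at most $d_c+1$ can never be an irreparable obstruction to giving $v$ the color $c$: either it is not $c$-saturated, or it has degree exactly $d_c+1$ and is recolored away for free. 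Any genuine obstruction to color $c$ thus comes only from a $c$-saturated neighbour of degree at least $d_c+2$ (or, secondarily, from $v$ having more than $d_c$ neighbours of color $c$, which I would thin out by the same free recolorings).

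Applying this with $c=1$ and $c=2$ yields both halves of the conclusion constructively. For $\deg(v)=2$ with neighbours $u_1,u_2$ it is already decisive: if some $u_i$ had degree at most $d_1+1$, I would color $v$ with the color opposite to $\phi(u_{3-i})$ and free-recolor $u_i$ if it interferes, so in fact both $u_1,u_2$ have degree $\ge d_1+2$; and if both had degree at most $d_2+1$, I would color $v$ with $2$ after freely recoloring any $2$-saturated neighbour, so at least one has degree $\ge d_2+2$ — which is the statement. The degree-$3$ case runs on exactly the same two reductions but must accommodate up to three simultaneously saturated neighbours, and this is where I expect the main obstacle to lie: the free recolorings must be carried out simultaneously and shown not to interfere, in particular when two recolored neighbours are adjacent to one another (here the absence of $3$-cycles eliminates the worst such interactions in this paper's setting, though the general lemma must dispose of them outright), and when $d_1$ or $d_2$ is small enough that $v$'s own bound $\le d_c$ is tight. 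I would organize this as a short case analysis on the pattern of colors $\phi(u_1),\phi(u_2),\phi(u_3)$ and their saturation, verifying in each case that some valid color for $v$ survives; the uniform safety of the degree-$(d_c+1)$ recolorings should keep every case routine.
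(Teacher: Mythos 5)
The paper does not actually prove this lemma: it is imported verbatim as Lemmas~2.1 and~2.2 of Choi--Raspaud \cite{choi35}, so the only ``proof'' in the paper is the citation. Your overall strategy --- take a $(d_1,d_2)$-coloring of $G-v$ by minimality and show it extends unless $v$ has the claimed large neighbours --- is the standard one, and your central observation is correct and is indeed the engine of the real proof: a $c$-saturated neighbour $u$ with $\deg_G(u)=d_c+1$ has all of its $G-v$-neighbours coloured $c$, so switching $u$ to $\bar c$ is safe and cascade-free, and a neighbour of degree at most $d_c$ can never be $c$-saturated. Your degree-$2$ analysis and the minimum-degree-$\ge 2$ observation are fine.

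There is, however, a concrete gap in how you dispose of the other obstruction, namely $v$ itself acquiring more than $d_c$ neighbours of colour $c$. You propose to ``thin out'' such neighbours ``by the same free recolorings,'' but the free recolouring applies \emph{only} to $c$-saturated neighbours of degree exactly $d_c+1$; a neighbour coloured $c$ that is \emph{not} saturated may have neighbours coloured $\bar c$, so switching it to $\bar c$ can violate its own bound or trigger a cascade, and nothing forces its degree to be small. This situation genuinely occurs even in the paper's own setting: with $(d_1,d_2)=(2,4)$, a $3$-vertex $v$ may have all three neighbours coloured $1$, blocking colour $1$ for $v$ regardless of saturation. The correct move there is not recolouring but switching colours for $v$ (one checks that $m_1>d_1$ and $m_2>d_2$ cannot both hold when $d(v)\le 3$ unless $d_1+d_2\le 1$), and the two claims are then obtained by choosing the target colour appropriately: for the ``two $(d_1+2)^+$-neighbours'' claim, colour $v$ opposite to the unique possibly-large neighbour; for the ``$(d_2+2)^+$-neighbour'' claim, aim for colour $2$. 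Your deferred ``routine'' case analysis is exactly where this must be carried out, and for very small $d_1,d_2$ (e.g.\ $d_1=0$, $d_2=1$) the statement as transcribed here for arbitrary $3^-$-vertices is broader than what \cite{choi35} proves (their lemmas concern $(d_1+1)^-$-vertices), so the case analysis does not in fact close uniformly; it does close for the paper's application, where $d_1=2$ makes $3^-$-vertices precisely the $(d_1+1)^-$-vertices.
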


	\begin{lemma}\label{l1}  
		Let $v$ be a vertex of a minimal non-$(2,4)$-colorable graph $G.$ 
		\begin{enumerate}
			\item [\rm (i)] If $v$ is a $3^-$-vertex, then $v$ is adjacent to two $4^+$-vertices, one of which is  a $6^+$-vertex. 
			\item [\rm (ii)]  If $v$ is a $4$-vertex, then $v$ is adjacent to a $6^+$-vertex. 
		\end{enumerate}
	\end{lemma}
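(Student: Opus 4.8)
For part (i) I would appeal directly to Lemma~\ref{l3}. Taking $d_1 = 2$ and $d_2 = 4$ (so that $d_1 \le d_2$), the lemma asserts that a $3^-$-vertex $v$ of a minimal non-$(2,4)$-colorable graph is adjacent to at least two $(d_1+2)^+ = 4^+$-vertices, one of which is a $(d_2+2)^+ = 6^+$-vertex. This is exactly the statement of (i), so no further work is needed.

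For part (ii) I would argue by contradiction in the usual delete--recolor--extend framework. Suppose $v$ is a $4$-vertex whose four neighbors $v_1,\dots,v_4$ are all $5^-$-vertices. By minimality, $H := G - v$ has a $(2,4)$-coloring $\phi$, and the plan is to adjust $\phi$ on the $v_i$ if necessary and then assign color $2$ to $v$. Call a neighbor $v_i$ \emph{saturated} if $\phi(v_i)=2$ and $v_i$ already has four neighbors of color $2$ in $H$; these are exactly the vertices that would block coloring $v$ with $2$. Two observations drive the argument. First, since $\deg_G(v_i)\le 5$ we have $\deg_H(v_i)\le 4$, so a saturated $v_i$ has \emph{all} of its $H$-neighbors colored $2$ and hence no neighbor of color $1$. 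Second, because $G$ has no $3$-cycle, no two neighbors of $v$ are adjacent (otherwise $vv_iv_j$ would be a triangle); consequently recoloring one $v_i$ does not alter the color counts at any other $v_j$.

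I would then recolor every saturated neighbor from color $2$ to color $1$, all at once. This keeps $\phi$ a valid $(2,4)$-coloring of $H$: each recolored $v_i$ acquires $0$ neighbors of color $1$, well inside the bound $d_1=2$, while every neighbor of a recolored $v_i$ had color $2$ and only loses color-$2$ neighbors, so its bound $d_2=4$ remains satisfied. After this step no neighbor of $v$ is saturated, so each color-$2$ neighbor of $v$ has at most three neighbors of color $2$ in $H$. Coloring $v$ with color $2$ therefore pushes each such count up to at most four and gives $v$ itself at most four neighbors of color $2$; the extension is a $(2,4)$-coloring of $G$, contradicting the choice of $G$.

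The crux --- and the only place where the hypotheses are really used --- is the safety of the recoloring step. It relies on both assumptions simultaneously: the degree bound guarantees that a saturated neighbor has no color-$1$ neighbor whose bound could be exceeded when we switch it to color $1$, and triangle-freeness guarantees that the recolorings of distinct saturated neighbors do not interfere through a shared neighbor's count. I expect this verification, rather than the final extension, to be the main technical point.
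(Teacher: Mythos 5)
Your proof is correct and follows essentially the same route as the paper's: part (i) is a direct instantiation of Lemma~\ref{l3} with $(d_1,d_2)=(2,4)$, and part (ii) deletes $v$, switches the blocking $5^-$-neighbours from colour $2$ to colour $1$ (safe precisely because such a neighbour has at most four neighbours in $G-v$, all coloured $2$, hence no colour-$1$ neighbour), and then colours $v$ with $2$. The one point to flag is your appeal to triangle-freeness: Lemma~\ref{l1} is stated for an arbitrary minimal non-$(2,4)$-colourable graph with no cycle restrictions (and the paper is explicit that each structural lemma assumes only what it needs), so the absence of $3$-cycles is not available to you, and it is exactly what you lean on to make the \emph{simultaneous} recolouring safe. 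The standard repair, which matches the paper's formulation, is to recolour the $v_i$ one at a time under the condition ``all neighbours of $v_i$ in $G-v$ are coloured $2$'': once some $v_i$ is switched to $1$, any adjacent $v_j$ fails the condition and is left alone, so no two adjacent neighbours of $v$ are ever both recoloured and every recoloured vertex retains zero colour-$1$ neighbours; alternatively, your argument is fine as written for the paper's actual application, where $G$ has no $3$-cycles.
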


	\begin{proof} 
		(i) It follows directly from Lemma \ref{l3}. 
		
		(ii) Let $v$ be a $4$-vertex of $G$. 
		Suppose to the contrary that $v$ is adjacent to four $5^-$-vertices, say $v_1,v_2,v_3,$ and $v_4$. 
		
		By minimality of $G$, the graph $G - v$ has a $(2, 4)$-coloring. 
		Note that each $v_i$ is a $4^-$-vertex in $G - v$ for $i\in\{1,2,3,4\}$. 
		If $v_i$ is colored with $2$, and all adjacent vertices of $v_i$ in $G-v$ are also colored by $2$, then we recolor $v_i$ to color $1$. 
		
		One can see the new coloring is also a  $(2, 4)$-coloring of $G-v$ where each $v_i$ colored with color $2$ has to at most three adjacent vertices colored with color $2$. 						
		We can then extend this $(2,4)$-coloring to $G$ by assigning $2$ to $v.$
		
		This contradiction completes the proof. 
	\end{proof}

	\begin{lemma}\label{face} 
		Let $G$ be a minimal counterexample $G$ to Theorem~\ref{main}. 
		Then $G$ has the following properties. 
		\begin{enumerate}
			\item [\rm (i)] If $f$ is a $k$-face where $3\leq k\leq 9$, then $b(f)$ is a $k$-cycle.  
			
			\item [\rm (ii)] $G$ does not contain a $k$-face where $k\in\{3,4,6\}$.
			
			\item [\rm (iii)] A $2$-vertex $v$ is not incident to two $5$-faces. 		
		\end{enumerate}	
	\end{lemma}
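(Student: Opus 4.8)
The plan is to derive all three parts from the absence of $3$-, $4$-, and $6$-cycles, using the minimum degree hypothesis only in part (i). Throughout I would keep in mind that every cycle of $G$ has length at least $5$ and avoids $6$, so the admissible cycle lengths are $5,7,8,9,\dots$; in particular any two cycles contribute at least $5+5=10$ edges.

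For part (i), I would argue by contradiction: suppose $b(f)$ is not a cycle for some $k$-face $f$ with $3\le k\le 9$. Since $b(f)$ is a closed walk that is not a simple cycle, it must revisit a vertex, so $G$ has a cut vertex or a bridge lying on $b(f)$. The idea is to show that $b(f)$ then splits into at least two closed sub-walks, each of which contains a genuine cycle of $G$; here the hypothesis $\delta(G)\ge 2$ is essential, as it forbids pendant vertices and forces every pendant subgraph poking into $f$ to contain a cycle whose edges are traversed by $b(f)$. Because each such cycle has length at least $5$, the total length of $b(f)$ is at least $10$, contradicting $k\le 9$. The main obstacle is making this decomposition rigorous: I would track the edges of $b(f)$, noting that bridges are traversed twice and non-bridge edges once, and verify that a non-cyclic boundary of length at most $9$ cannot accommodate two disjoint cycles of length $\ge 5$ together with their connecting bridges.

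Part (ii) is then immediate. If $G$ had a $k$-face $f$ with $k\in\{3,4,6\}$, then $3\le k\le 9$, so part (i) would force $b(f)$ to be a $k$-cycle; but $G$ contains no $3$-, $4$-, or $6$-cycle, a contradiction.

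For part (iii), suppose a $2$-vertex $v$ with neighbors $u$ and $w$ is incident to two $5$-faces $f_1$ and $f_2$. By part (i) each $b(f_i)$ is a $5$-cycle, and since $v$ has only the edges $vu$ and $vw$, both cycles contain the path $u\,v\,w$; hence $f_1\ne f_2$ and each supplies a length-$3$ path from $w$ to $u$ internally avoiding $v$, say $P_1=w\,x_1\,y_1\,u$ and $P_2=w\,x_2\,y_2\,u$. Concatenating $P_1$ with the reverse of $P_2$ yields a closed walk of length $6$. If its internal vertices are all distinct, this is a $6$-cycle, which is forbidden; otherwise a coincidence among $x_1,y_1,x_2,y_2$ occurs, and a short case check shows that each possibility ($x_1=x_2$, $y_1=y_2$, $x_1=y_2$, or $y_1=x_2$) produces a $3$- or $4$-cycle, again forbidden. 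The only subtlety here is confirming that the case analysis is exhaustive and that $P_1=P_2$ cannot happen, which follows from $f_1\ne f_2$.
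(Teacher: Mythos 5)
Your proposal follows essentially the same route as the paper, which disposes of (i) and (ii) in one line (a non-simple face boundary in a graph with $\delta(G)\ge 2$ and girth $\ge 5$ must have length at least $10$, and then the cycle exclusions kill lengths $3,4,6$) and of (iii) by the same ``two $5$-cycles through the path $u\,v\,w$ force a $3$-, $4$-, or $6$-cycle'' argument. The one place where your write-up is not quite right is the degenerate case of (iii): the claim that $P_1=P_2$ is impossible ``because $f_1\ne f_2$'' is false as stated, since a single $5$-cycle bounds two distinct faces with identical boundary. You need to observe that $b(f_1)=b(f_2)$ forces $G$ to be exactly that $5$-cycle, which is then excluded by the degree conditions --- e.g.\ Lemma~\ref{l1}(i), which says the $2$-vertex $v$ must have a $4^+$-neighbor; this is precisely why the paper invokes Lemma~\ref{l1}(i) in its proof of (iii). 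With that one-line repair your argument is complete and matches the paper's.
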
	
	
	\begin{proof}
		Lemmas \ref{face}(i) and (ii) follow directly from the fact that $G$ contains no $1$-vertices, nor  $3$-, $4$-, or $6$-cycles. 
		
		By combining this fact with Lemma \ref{l1}(i), we obtain Lemma \ref{face}(iii). 
	\end{proof}
	
	\begin{lemma}\label{5face} 
		Let $G$ be a minimal counterexample $G$ to Theorem~\ref{main}. 
		Then each $k$-face $f$ in $G$ has the following properties. 
		\begin{enumerate}
			\item [\rm (i)] $N_2(f)\leq \lfloor\frac{k}{2}\rfloor$.
			
			\item [\rm (ii)] $N_3(f)\leq k-2N_2(f)-1$ if $N_2(f)< \lfloor\frac{k}{2}\rfloor$.
			
			\item [\rm (iii)] $N_3(f)=0$ if $N_2(f)= \lfloor\frac{k}{2}\rfloor$.
		\end{enumerate}	
	\end{lemma}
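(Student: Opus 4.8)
My plan is to reduce all three parts to a single consequence of minimality. By Lemma~\ref{l1}(i), a $2$-vertex has exactly two neighbors and both are $4^+$-vertices. Reading the boundary $b(f)$ as a cyclic sequence of $k$ vertices in which consecutive entries are adjacent, this says that no two $2$-vertices are consecutive and, crucially, that each vertex immediately preceding or following a $2$-vertex on $b(f)$ is itself a $4^+$-vertex. Part~(i) then follows at once: the incident $2$-vertices form an independent set along a cycle of length $k$, and such a set has at most $\lfloor k/2\rfloor$ members, so $N_2(f)\le\lfloor k/2\rfloor$.

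For (ii) and (iii) I would set $m=N_2(f)$ and delete the $m$ incident $2$-vertices from the cyclic sequence; they cut $b(f)$ into exactly $m$ nonempty blocks of consecutive non-$2$-vertices, of total length $k-m$, and by the observation above the first and last vertex of each block is a $4^+$-vertex. For (iii), when $m=\lfloor k/2\rfloor$ we have $k-m=\lceil k/2\rceil$ non-$2$-vertices spread over $m$ blocks, forcing every block to have length $1$ (if $k$ is even) or all but one to have length $1$ with a single block of length $2$ (if $k$ is odd); in either case every non-$2$-vertex is an endpoint of its block, hence a neighbor of a $2$-vertex, hence a $4^+$-vertex, so no $3$-vertex is incident to $f$ and $N_3(f)=0$. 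For (ii), when $m<\lfloor k/2\rfloor$ we have $k>2m$, so $k-m>m$ and the $m$ blocks cannot all have length $1$; at least one block has length $\ge 2$ and contributes two distinct $4^+$ endpoints, while each remaining block contributes at least one, giving at least $m+1$ incident $4^+$-vertices. Since the boundary positions partition into $2$-, $3$-, and $4^+$-vertices, this yields $N_3(f)\le k-m-(m+1)=k-2N_2(f)-1$.

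The hard part will be turning ``some block has length $\ge 2$'' into the extra $+1$ in the $4^+$-count rigorously, i.e.\ confirming that the endpoints counted across the various blocks are genuinely distinct vertices so that the count $N_{4^+}(f)\ge m+1$ is a count of distinct incident vertices. This is transparent when $b(f)$ is a cycle with distinct vertices, which holds at least for $3\le k\le 9$ by Lemma~\ref{face}(i); for larger faces I would argue directly on the boundary walk, using that a degree-$2$ vertex occupies exactly one position whose two walk-neighbors are $4^+$, so the block decomposition and endpoint argument carry over verbatim. The strict inequality $k>2m$ in case~(ii) and the rigidity of the maximal packing in case~(iii) are exactly the places where the distinction between $\lfloor k/2\rfloor$ and $N_2(f)$ does its work, so I would keep those two regimes cleanly separated throughout.
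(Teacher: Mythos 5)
Your route is genuinely different from the paper's. The paper marks four sets of positions on $b(f)=v_1v_2\cdots v_k$ --- the $2$-vertices, the $3$-vertices, the predecessors of $2$-vertices, and the predecessors of $3$-vertices --- argues via Lemma~\ref{l1}(i) that these are pairwise disjoint, and reads off $2N_2(f)+2N_3(f)\le k$, from which (i)--(iii) follow by parity. You instead delete the $2$-vertices, decompose the rest of the boundary into blocks, and charge $4^+$-vertices against block endpoints. For part (i) and part (iii) your argument is correct (in (iii) one has $m=\lfloor\frac{k}{2}\rfloor\ge 1$ for every face, so the block structure is nonempty and the rigidity argument goes through), and for $m\ge 1$ your count in (ii) is also sound; a pleasant feature of your approach there is that it only uses the fact that neighbours of $2$-vertices are $4^+$-vertices.

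The gap is in part (ii) when $N_2(f)=0$. This case is covered by the statement (the hypothesis $N_2(f)<\lfloor\frac{k}{2}\rfloor$ holds with $N_2(f)=0$ for every $k\ge 3$) and is actually used in the discharging (case (8) invokes $N_3(f)\le 4$ for a $5$-face with $N_2(f)=0$). With $m=0$ there are no blocks and no endpoints, so the assertion ``the $m$ blocks cannot all have length $1$'' is vacuously wrong and the bound $N_{4^+}(f)\ge m+1=1$ has no support from your argument; you must show separately that not every vertex of $b(f)$ is a $3$-vertex. This is true, but it requires applying Lemma~\ref{l1}(i) to $3$-vertices rather than to $2$-vertices: if some $v_i$ on $b(f)$ is a $3$-vertex, two of its three neighbours are $v_{i-1}$ and $v_{i+1}$, and since $v_i$ must have two $4^+$-neighbours, at least one of $v_{i-1},v_{i+1}$ is a $4^+$-vertex, giving $N_{4^+}(f)\ge 1$ and hence $N_3(f)\le k-1$. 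Adding that observation closes the gap. (Your concern about repeated vertices on the boundary walk of a long face is legitimate, but the paper's own proof identifies positions with vertices in exactly the same way, so I would not count that against you.)
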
	
	
	\begin{proof} 
		Let $f$ be a $k$-face of $G$ with $b(f)=v_1v_2\dots v_k$.
		
		Define the following sets where each subscript is taken modulo $k.$:
		
		$A$ := the set of $v_i$ where $v_i$ is a $2$-vertex. 
		
		$B$ := the set of $v_i$ where $v_i$ is a $3$-vertex. 
		
		$C$ := the set of $v_i$ where $v_{i+1}$ is a $2$-vertex. 
		
		$D$ := the set of $v_i$ where $v_{i+1}$ is a $3$-vertex.
		
		By Lemma \ref{l1}(i), the sets $A, B, C,$ and $D$ are pairwise disjoint. 
		Note that $|A|=|C|= N_2(f)$ and $|B|= |D| = N_3(f).$
		The properties in Lemma~\ref{5face} follow directly from these observations.
	\end{proof}

	\section{Proof of Theorem \ref{main}}\label{discharge}
	
	\indent 
	Consider $G$ as a minimal counterexample to Theorem \ref{main}. 
	The discharging process is defined as follows:
	
	Each vertex $v$ in $G$ is assigned an initial charge $\mu(v) = d(v) - 4$, where $d(v)$ denotes the degree of vertex $v$. 
	Similarly, each face $f$ in $G$ is assigned an initial charge $\mu(f) = d(f) - 4$, where $d(f)$ denotes the degree of face $f$.
	
	Using Euler's formula, which states that $|V(G)| - |E(G)| + |F(G)| = 2$, along with the Handshaking lemma, we obtain the following equation:
	
	$$\displaystyle\sum_{v\in V(G)}\mu(v)
	+\displaystyle\sum_{f\in F(G)}\mu(f)=-8.$$
	
	Next, we define a new charge $\mu^*(x)$ for all $x \in V(G) \cup F(G)$ by redistributing the charge among vertices and faces. 
	The total sum of the new charges $\mu^*(x)$ remains $-8$. 
	If the final charge $\mu^*(x) \geq 0$ for all $x \in V(G) \cup F(G)$, this leads to a contradiction, thereby completing the proof.
	
	The rules for discharging are as follows:
	
	\noindent (R1) A $4$-vertex $v$ give charge $t$ to each of its adjacent $2$-vertices where $t=\frac{1}{9}$ if $v$ is adjacent to exactly one $4^+$-vertex, and $t=\frac{1}{6}$ otherwise.	
	
	\noindent (R2) A $5$-vertex $v$ gives charge $\frac{1}{5}$ to each of its adjacent $3^-$-vertices.
	
	\noindent (R3) A $6^+$-vertex $v$ gives charge $t$ to each of its adjacent $4^-$-vertices where $t=\frac{1}{3}$ if $v$ is not adjacent to a $5^+$-vertex, and $t=\frac{2}{5}$ otherwise.
	
	\noindent (R4) A $5$-face $f$ gives charge $t$  to each of its incident $2$-vertices where $t=\frac{5}{9}$ if $f$ is incident to one $2$-vertex, and $t=\frac{1}{2}$ otherwise. 
	
	\noindent (R5) A $7^+$-face $f$ gives charge $1$  to each of its incident $2$-vertices.
	
	\noindent (R6) After applying rules (R1) to (R5), redistribute the charges between any two $2$-vertices that are incident to the same $5$-face so that they each have equal charge.
	
	\noindent (R7) A $5^+$-face $f$ gives charge $\frac{2}{9}$ to each of its incident $3$-vertices.

	It remains to show that the resulting charge $\mu^*(x) \geq 0$ for all $x \in V(G) \cup F(G)$. 
	It is clear that $\mu^*(x) \geq 0$ holds when $x$ is a $k$-face when $x\in\{3,4,6\}$ or when $x$ is a $4$-vertex.

	
	Now, consider a $k$-vertex $v$. 
	
	(1) Suppose $v$ is a $2$-vertex not incident to a $5$-face. 
	It follows from Lemma \ref{face} that $v$ is incident to two $7^+$-face. 
	Then $\mu^*(v) \geq-2+2\times 1=0$ by (R5).\\

	(2) Suppose $v$ is a $2$-vertex incident to a $5$-face $f$ 
	where $f$ is incident to exactly one $2$-vertex. 
	By Lemma \ref{face}, the other incident face of $v$ is a $7^+$-face. 
	Then $\mu^*(v) \geq-2+\frac{1}{9}+\frac{1}{3}+\frac{5}{9}+1=0$ by (R1)-(R5).\\ 
	
	(3) Suppose $v$ is a $2$-vertex incident to a $5$-face $f$ 
	where $f$ is incident to two $2$-vertices.
	Let $b(f)=vv_1v_2uv_3$. 
	By Lemma \ref{l3}, we can suppose that $u$ is a $2$-vertex and each of  $v_1, v_2, v_3$ is a $4^+$-vertex. 
	By Lemma \ref{face}, the other incident face of $v$ and $u$ is a $7^+$-face. 
	
	By (R6), we consider $\mu^*(v)+\mu^*(u)$ instead of only $\mu^*(v)$ in the following three subcases. 
	
	\begin{enumerate}
		\item [\rm (i)] If $v_3$ is a $4$- or $5$-vertex, then by Lemma \ref{l1}(i), both of $v_1$ and $v_2$ are $6^+$-vertices.         
		Since each of $v_1$ and $v_2$ is adjacent to a $6^+$-vertex, it follows that $\mu^*(v)+\mu^*(u) \geq 2\times (-2 + \frac{1}{9} + \frac{2}{5}+\frac{1}{2}+1) >0$ by (R1)-(R5). 
		
		\item [\rm (ii)]  If $v_3$ is a $6^+$-vertex and both of $v_1$ and $v_2$ are $4$- or $5$-vertices, then by Lemma \ref{l1}(ii), each of $v_1$ and $v_2$ is a $5$-vertex or is adjacent to at least two $4^+$-vertices. 
		Thus $\mu^*(v)+\mu^*(u) \geq 2\times (-2+\frac{1}{6} + \frac{1}{3}+\frac{1}{2}+1)=0$ by (R1)-(R5). 
		
		\item [\rm (iii)] If $v_3$ is a $6^+$-vertex and  $v_1$ or $v_2$ is a $6^+$-vertex, then $\mu^*(v)+\mu^*(u) > -4 +3\times \frac{1}{3} +2\times \frac{1}{2} +2\times 1=0$ by (R3)-(R5). 
	\end{enumerate}
	
	Using (R6), we conclude that $\mu^*(v) \geq 0.$
	
	(4)  Suppose $v$ is a $3$-vertex. 
	By Lemma \ref{l1}(i), the vertex $v$ is adjacent to a $6^+$-vertex.  
	Then $\mu^*(v) \geq-1 +\frac{1}{3} +3 \times\frac{2}{9}=0$ by (R2), (R5), and (R7). \\ 
	
	(5) Suppose $v$ is a $4$-vertex. 
	By Lemma \ref{l1}(ii), the vertex $v$ is adjacent to a $6^+$-vertex. 
	If $v$ is adjacent to exactly one $4^+$-vertex, then $\mu^*(v) \geq 0-3\times\frac{1}{9}+\frac{1}{3}=0$, otherwise $\mu^*(v) \geq 0-2\times\frac{1}{6} + \frac{1}{3}=0$ by (R1) and (R3).\\

	(6) Suppose $v$ is a $5$-vertex.   
	Then $\mu^*(v) \geq 1-5\times\frac{1}{5}=0$ by (R2).\\

	(7) Suppose $v$ is a $k$-vertex where $k\geq 6$.  
	If $v$ is adjacent to a $6^+$-vertex, then $\mu^*(v) \geq (k-4)-(k-1)\times\frac{2}{5}\geq 0$, otherwise $\mu^*(v) \geq (k-4)-k\times\frac{1}{6} > 0$ by (R3).\\
	

		Next, we consider a $k$-face $f$. 
		Recall that $N_t(f)$ denotes the number of incident $t$-vertices of $f$.\\
		
		(8) Suppose $f$ is a $5$-face. 
		It follows from Lemma \ref{5face}(i) that $N_2(f)\leq 2$. 
		
		If $N_2(f)=0$, then $N_3(f)\leq 4$ by Lemma \ref{5face}(ii), and thus $\mu^*(f)\geq1-4\times\frac{2}{9}>0$ by (R7). 
		
		If $N_2(f)=1$, then $N_3(f)\leq 2$ by Lemma \ref{5face}(ii), and thus $\mu^*(f)\geq 1-\frac{5}{9}-2\times\frac{2}{9}=0$ by (R4) and (R7). 
		
		If $N_2(f)=2$, then $N_3(f)=0$ by Lemma \ref{5face}(iii), and thus $\mu^*(f)=1-2\times\frac{1}{2}=0$ by (R4).\\

			
			
		
		%
		%

		(9) Suppose $f$ is a $k$-face where $k\geq 7$ and $N_2(f)= \lfloor\frac{k}{2}\rfloor$. 
		It follows from Lemma \ref{5face}(iii) that $N_3(f)=0$. 
		Thus $\mu^*(f)=(k-4)-\lfloor\frac{k}{2}\rfloor\times 1\geq 0$ by (R5).\\ 
		
		
		(10) Suppose $f$ is a $k$-face where $k\geq 7$ and $N_2(f) \neq \lfloor\frac{k}{2}\rfloor$. 
		
		It follows from Lemma \ref{5face}(i) that $N_2(f) < \lfloor\frac{k}{2}\rfloor$. 
		Using  Lemma \ref{5face}(ii), we obtain that $N_3(f)\leq k-2 N_2(f)-1$.
		
		
		By (R5) and (R7), we have 
		\begin{align*}
			\mu^*(f)&=(k-4)-N_2(f)\times1-N_3(f)\times\frac{2}{9}\\
			&\geq (k-4)-N_2(f)\times1-(k-2\times N_2(f)-1)\times\frac{2}{9} \\
			&=\frac{7}{9} k-\frac{34}{9}-\frac{5}{9} N_2(f). 
		\end{align*}
		
		If $k=7,$ then $ \mu^*(f) \geq \frac{49}{9} -\frac{34}{9}-\frac{10}{9}>0.$ \\
		
		If $k\geq 8,$ then $ \mu^*(f) \geq \frac{7}{9} k-\frac{34}{9}-\frac{5}{9} \times \frac{k}{2} = \frac{1}{2} k-\frac{34}{9} > 0.$
		

		
		It follows from  (1)-(10) that $\mu^*(x) \geq 0$ for each $x \in V(G) \cup F(G)$. 
		This contradiction completes the proof. 
		
		\noindent \textbf{Acknowledgments} The first author is (partially) supported by the Centre of Excellence in Mathematics, Ministry of Higher Education, Science, Research, and Innovation, Thailand.

	\end{document}